\documentclass[12pt]{amsart}
\usepackage[utf8]{inputenc}
\usepackage{geometry}
\geometry{a4paper,top=3.5cm,bottom=3.8cm,left=2.5cm,right=2.5cm}
\usepackage[english]{babel}
\usepackage{amsthm}
\usepackage{amsmath}
\usepackage{MnSymbol}
\usepackage{enumerate}
\usepackage{amsfonts}
\usepackage{amscd}

\newtheorem{theorem}{Theorem}

\DeclareMathOperator{\Spec}{Spec}
\DeclareMathOperator{\Sing}{Sing}
\DeclareMathOperator{\Branch}{Branch}

\DeclareMathOperator{\loc}{loc}
\DeclareMathOperator{\sh}{sh}
\DeclareMathOperator{\et}{\text{\'{e}t}}

\DeclareMathOperator{\im}{im}
\DeclareMathOperator{\BL}{BL}
\DeclareMathOperator{\reg}{reg}

\newcommand{\ger}[1]{\mathfrak{#1}}
\newcommand{\imp}{\Rightarrow}
\newcommand{\bs}{\backslash}

\newtheorem{corollary}{Corollary}

\theoremstyle{definition}
\newtheorem*{definition}{Definition}
\newtheorem*{remark}{Remark}
\newtheorem*{note}{Note}
\newtheorem*{acknow}{Acknowledgement}
\newtheorem{example}{Example}
\newtheorem*{notation}{Notation}
\newtheorem{lem}{Lemma}

\begin{document}
\title{\'{E}tale Covers and Local Algebraic Fundamental Groups}

\begin{abstract}Let  $X$ be a normal noetherian scheme and $Z \subseteq X$ a closed subset of codimension $\geq 2$. We consider here the local obstructions to the map $\hat{\pi}_{1}(X\backslash Z) \to \hat{\pi}_{1}(X)$ being an isomorphism. Assuming $X$ has a regular alteration, we prove the equivalence of the obstructions being finite and the existence of a Galois quasi-\'{e}tale cover of $X$, where the corresponding map on fundamental groups is an isomorphism.
\end{abstract}
\date{}

\author{Charlie Stibitz}
\address{Department of Mathematics, Princeton University, Princeton, NJ 08544, USA}
\email{cstibitz@math.princeton.edu}
\maketitle 
\section{Introduction} Suppose that $X$ is a normal variety over $\mathbb{C}$ and $Z \subseteq X$ is a closed subset of codimension $2$ or more. Then a natural question to pose is whether the surjective map of fundamental groups $\pi_{1}(X\bs Z) \to \pi_{1}(X)$ is an isomorphism. For general normal schemes we can ask the same question for \'{e}tale fundamental groups.  For a regular scheme the Zariski-Nagata theorem on purity of the branch locus implies the above map on \'{e}tale fundamental groups is an isomorphism (see \cite{zariski1958purity}, \cite{nagata1959purity}). For a general normal scheme however this map need not be an isomorphism so that \'{e}tale covers of $X\bs Z$ need not extend to all of $X$. \\
\indent The next question to ask is what are the obstructions to the above map being an isomorphism. Restricting any cover to a neighborhood of a point, we see that in order for it to be \'{e}tale, it must restrict to an \'{e}tale cover locally. Hence each point of $Z$ gives rise to a possible obstruction determined by the image of the local \'{e}tale fundamental group into the \'{e}tale fundamental group of $X\bs Z$. Assuming these all vanish, the above map will be an isomorphism. \\ 
\indent Even if they do not vanish, we can still hope for something in the case where all the obstructions are finite. A first guess might be that this would imply that the kernel of the above map is finite, yet Example 1 of the singular Kummer surface  shows that this can be far from true in general. What is true however is that after a finite cover that is \'{e}tale in codimension 1 the corresponding map is an isomorphism. The main theorem here is that this is is fact equivalent to finiteness of the obstructions and a couple other similar conditions:  

\begin{theorem} Suppose that $X$ is a normal noetherian scheme of finite type over an excellent base $B$ of dimension $\leq 2$. Let $Z = \Sing(X) \subseteq X$. Then the following are equivalent. 
\begin{enumerate}[(i)]
    \item For every geometric point $x \in Z$ the image $G_{x} := \im[\pi_{1}^{\et}(X_{x}\bs Z_{x}) \to \pi^{\et}_{1}(X\bs Z)]$ is finite (see $\S 2$ for definitions of $X_{x}$ and $Z_{x}$).
    \item There exists a finite index closed normal subgroup $H \subseteq \pi_{1}^{\et}(X\bs Z)$ such that $G_{x} \cap H$ is trivial for every geometric point $x \in X$. 
    \item For every tower of quasi-\'{e}tale Galois covers of $X$
   \[X \leftarrow X_{1} \leftarrow X_{2} \leftarrow X_{3} \leftarrow \cdots\]
    $X_{i+1} \to X_{i}$ are \'{e}tale for $i$ sufficiently large.
   \item There exists a finite, Galois, quasi-\'{e}tale cover $Y \to X$ by a normal scheme $Y$ such that any \'{e}tale cover of $Y_{\reg}$ extends to an \'{e}tale cover of $Y$.
\end{enumerate}
\end{theorem}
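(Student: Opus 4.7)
The plan is to prove the cycle (i) $\imp$ (ii) $\imp$ (iv) $\imp$ (iii) $\imp$ (i), with (ii) $\imp$ (i) being immediate since $G_x \cap H$ is a trivial subgroup of finite index in $G_x$, forcing $G_x$ finite. For (i) $\imp$ (ii), the apparent obstacle is that $Z$ may have infinitely many geometric points; but the finite type hypothesis on $X$ over $B$ together with the existence of a regular alteration (afforded by $B$ being excellent of dimension $\leq 2$) allow one to stratify $Z$ into finitely many locally closed pieces on which $G_x$ is constant up to conjugation in $\pi_{1}^{\et}(X \bs Z)$. With only finitely many finite groups $G_x$ to separate from the identity, profinite separation yields an open normal $H_x$ of finite index with $G_x \cap H_x = \{1\}$ for each representative, and $H = \bigcap H_x$ does the job.

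For (ii) $\imp$ (iv), let $V \to U := X \bs Z$ be the finite \'{e}tale Galois cover associated to $H$, and let $Y$ be the normalization of $X$ in the function field of $V$. Then $Y \to X$ is finite, Galois with group $\pi_{1}^{\et}(U)/H$, and quasi-\'{e}tale ($Y \to X$ is \'{e}tale over $U$, and $Z$ has codimension $\geq 2$). For a geometric point $y \in \Sing(Y)$ lying over $x \in Z$, the functoriality of local \'{e}tale fundamental groups identifies $G_y^{Y}$ with $G_x \cap H = \{1\}$; for $y \in Y_{\reg}$, local obstructions vanish by Zariski--Nagata purity. Hence $\pi_{1}^{\et}(Y_{\reg}) \to \pi_{1}^{\et}(Y)$ is an isomorphism and (iv) holds.

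The crucial step is (iv) $\imp$ (iii). Set $\Pi = \pi_{1}^{\et}(U)$, $\Pi_Y = \pi_{1}^{\et}(Y_U) \subseteq \Pi$ open normal of index $|G|$ where $G = \operatorname{Gal}(Y/X)$, and let $N_i \subseteq \Pi$ be the open normal subgroup corresponding to $X_{i} \times_X U \to U$. Then $X_{i+1} \to X_i$ is \'{e}tale if and only if $N_{i+1}$ contains the $N_i$-normal closure of $\{G_y^{X_i}\}_{y \in \Sing(X_i)}$, and for $y$ over $x \in Z$ one has $G_y^{X_i} = G_x \cap N_i$. By (iv) the normalization $\tilde{Y}_i$ of $Y \times_X X_i$ is a quasi-\'{e}tale, hence \'{e}tale, cover of $Y$; the local $G$-action on $\tilde Y_i \to X_i = \tilde Y_i / G$ at points over $\Sing(X_i)$ identifies each $G_y^{X_i}$ with a stabilizer subgroup of $G$, forcing $|G_x \cap N_i| \leq |G|$ uniformly in $i$. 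Combined with the finite stratification of $Z$, the descending chains $\{G_x \cap N_i\}_i$ stabilize simultaneously at some stage $i_0$. For $i \geq i_0$, $G_x \cap N_i = G_x \cap N_{i+1} \subseteq N_{i+1}$; since $N_{i+1}$ is normal in $\Pi$ (hence in $N_i$) and contains these generators, it contains their $N_i$-normal closure, yielding \'{e}taleness of $X_{i+1} \to X_i$.

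Finally, (iii) $\imp$ (i) goes by contrapositive: if $G_x$ is infinite, the open subgroups $\{G_x \cap N\}$ as $N$ ranges over open normal subgroups of $\Pi$ cannot take only finitely many distinct values (else some $G_x \cap N$ is trivial and $G_x$ embeds into the finite quotient $\Pi/N$), so one chooses $N_1 \supsetneq N_2 \supsetneq \cdots$ with $G_x \cap N_i \supsetneq G_x \cap N_{i+1}$ for each $i$, giving a tower of quasi-\'{e}tale Galois covers $X_i \to X$ along which \'{e}taleness fails at points over $x$ at every stage. The main obstacle in the proof is the stabilization argument in (iv) $\imp$ (iii): specifically, the uniform bound $|G_x \cap N_i| \leq |G|$ and the handling of points $x \in Z$ whose preimages in $Y$ happen to all be regular, which requires careful local analysis of the quotient presentation $X_i = \tilde Y_i / G$.
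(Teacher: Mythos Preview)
Your cycle differs from the paper's: the paper proves (i)$\Leftrightarrow$(ii), (i)$\Rightarrow$(iii), (iii)$\Rightarrow$(iv), (iv)$\Rightarrow$(i), whereas you attempt (ii)$\Rightarrow$(iv)$\Rightarrow$(iii)$\Rightarrow$(i). Your (ii)$\Rightarrow$(iv) and (iii)$\Rightarrow$(i) are correct and pleasantly direct; the paper reaches (iv) only via (iii), and reaches (i) from (iv) rather than from (iii). Your (i)$\Leftrightarrow$(ii) matches the paper's, modulo the minor imprecision that the stratification does not literally make $G_x$ ``constant up to conjugation'' on strata; what it does guarantee is that for every open normal $N$, the set $\{x:G_x\subseteq N\}$ is a union of strata, which is exactly what is needed.

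The genuine gap is in your (iv)$\Rightarrow$(iii), precisely at the ``simultaneous stabilization'' step. The stratification of $X$ coming from a regular alteration of $X$ controls branch loci of quasi-\'etale covers \emph{of $X$}: for each open normal $N\subseteq\Pi$, whether $G_x\subseteq N$ depends only on the stratum of $x$. But the \'etaleness of $X_{i+1}\to X_i$ over a point above $x$ is the condition $G_x\cap N_i\subseteq N_{i+1}$, equivalently that the images of $G_x$ in $\Pi/N_i$ and $\Pi/N_{i+1}$ have the same order. This is \emph{not} of the form ``$G_x\subseteq N$'' for a single normal $N$, and there is no reason it should be constant on the strata of $X$. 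Knowing only that the normal closures of $G_x$ and $G_{x'}$ in $\Pi$ coincide (which is all the stratification of $X$ gives for $x,x'$ in the same stratum) does not force $|G_x\cap N_i|=|G_{x'}\cap N_i|$. So from the stratification of $X$ alone you cannot bound the stabilization index $i_0(x)$ uniformly. The paper flags exactly this issue in the Remark following Theorem~2: one cannot work with a single alteration of $X$; instead, in proving (i)$\Rightarrow$(iii) the paper runs a Noetherian induction on open subsets of $X$ and at each step takes a \emph{new} regular alteration of some $X_N$ further along the tower, so that the resulting stratification of $X_N$ governs the branch loci of $X_{N+k}\to X_N$.

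There is an easy salvage. Your own argument already shows $G_x\cap\Pi_Y=\{1\}$, hence $|G_x|\le |G|$ for every $x$; this is (i) (indeed (ii)). From there, abandon the attempt to deduce (iii) purely from the stratification of $X$ and instead run the paper's Noetherian induction, taking an alteration of $X_N$ at each step. Alternatively, reorganize your cycle as (ii)$\Rightarrow$(iv)$\Rightarrow$(i)$\Rightarrow$(iii), using your direct arguments for the first two arrows and the paper's argument for the last.
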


\indent As hinted to above, we can rephrase the question of the $\pi_{1}^{\et}(X\bs Z) \to \pi_{1}^{\et}(X)$ being an isomorphism as a purity of the branch locus statement of $X$. Both are equivalent to the fact that any \'{e}tale cover of $X\bs Z$ extends to an \'{e}tale cover of $X$. By purity of the branch locus for $X_{\text{reg}}$, any \'{e}tale cover of $X\bs Z$ will at least extend to $X_{\text{reg}}$. Hence it is enough to consider the case where $Z = X_{\reg}$, as we have done in the theorem. From this point of view, (iv) says that we can obtain purity after a finite, Galois, quasi-\'{e}tale cover. \\
\indent The following example of the singular Kummer surface then elucidates what is going on in the above theorem. \\

\begin{example}
Consider the quotient $\pi:A \to A/\pm = X$, where $A$ is an abelian surface and $X$ is a singular Kummer surface over $\mathbb{C}$. Away from the 16 $2$-torsion points this map is \'{e}tale, but at the $2$-torsion points it ramifies. Each of these $2$-torsion points gives rise to a nontrivial $\mathbb{Z}/2\mathbb{Z}$ obstruction. In particular any \'{e}tale cover of $A$ will give a cover of $X$ not satisfying purity of the branch locus, and in particular there are infinitely many such covers. On the other hand the fundamental group of $X$ is trivial, which can be seen as $X$ will be diffeomorphic to a standard Kummer surface given as a singular nodal quartic in $\mathbb{P}^{3}$ with the maximum number of nodes. In particular its \'{e}tale fundamental group is trivial, and there are no \'{e}tale covers of $X$. Note that on the other hand as $A$ is smooth we  obtain purity on a finite cover of $X$ that is \'{e}tale away from a set of codimension $2$ as in (iv) of the above theorem. In this sense, although the kernel of the map $\hat{\pi}_{1}(X_{\reg}) \to \hat{\pi}_{1}(X)$ is large it is not far away from satisfying purity. 
\end{example}

\indent The recent history of studying these two problems started with a paper of Xu \cite{X14} showing that the local obstructions are finite for all klt singularities over $\mathbb{C}$. From this Greb, Kebekus, and Peternell \cite{GPK16} were able to show the global statements of (iii) and (iv) in the above theorem for klt singularities. Their proof used essentially the existence of a Whitney stratification, which allowed them to check only finitely many strata to prove a cover is \'{e}tale. Then in positive characteristic, Caravajal-Rojas, Schwede, and Tucker \cite{CST16} proved again that the local obstructions are finite for strongly $F$-regular singularities (which are considered a close analogue of klt singularities in positive characteristic). Using a bound on the size of the local fundamental groups from this paper, Bhatt, Carvajal-Rojas, Graf, Tucker and Schwede \cite{bhatt16} were able to construct a stratification that enabled them to run a similar local to global argument to deduce statements of the form (ii), (iii), and (iv) for strongly $F$-regular varieties. It is also worth noting in the recent preprint of Bhatt, Gabber, and Olsson \cite{bhatt2017} they are able to reprove the results in characteristic $0$ by spreading out to characteristic $p$. \\
\indent The proof of Theorem 1 can be broken down essentially into two parts. One is the construction of a stratification that allows us to deal with only finitely many obstructions. The second is a completely group theoretic fact about profinite groups, namely if we have a finite collection of finite subgroups of a profinite group, then there exists a closed finite index subgroup which intersects all of these groups trivially. Note that the assumption that the covers are Galois and the finite index subgroup is normal is essential. In fact due to the choice of basepoint that we have suppressed above (note that $x$ is \textit{not} the basepoint of these fundamental groups), $G_{x}$ is actually only a conjugacy class of a finite subgroup inside of the group $\pi_{1}^{\et}(X\bs Z)$. \\
\indent Finally it is worth noting that in Xu's paper a different local fundamental group, $\pi_{1}^{\et}(X_{x}\bs \{x\})$ was shown to be finite. If we instead defined our obstruction groups $G_{x}$ as Xu did then in fact the theorem is false, as will be shown in the following example. There are two ways around this for klt singularities: either showing the larger fundamental groups $\pi_{1}^{\et}(X_{x}\bs Z_{x})$ are finite in the klt case, or show that a similar implication (i) $\imp$ (ii),(iii) will hold as long as the smaller local fundamental groups of all covers \'{e}tale in codimension 1 remain finite, which is the case for klt singularities.  We will discuss this issue further in the last section. \\

\begin{example}
 Consider  $X = CS$ the cone over a Kummer surface $S = A/\pm $ where $A$ is an abelian surface. Then there are three types of singular points: \\
 \indent First consider the case where $x$ is the generic point of the cone over one of the nodes. Then $X_{x}$ has a regular double quasi-\'{e}tale cover ramifying at $x$ (note that since we have localized there is no difference between the two possible fundamental groups). This shows $\hat{\pi}_{1}^{\loc}(X_{x}\bs\{x\}) = \hat{\pi}_{1}^{\loc}(X_{x}\bs Z_{x}) \cong \mathbb{Z}/2\mathbb{Z}$, and there is no ambiguity in which definition we choose. In general this will work for the generic point of any irreducible component of the singular locus. \\
 \indent The next type of point where we start to see a difference is when  $x$ is a closed point in the cone over a node of $S$. Then in this case $\hat{\pi}_{1}^{\loc}(X_{x}\bs\{x\})$ is trivial while $\hat{\pi}_{1}^{\loc}(X_{x}\bs \Sing(X)_{x}) \cong \mathbb{Z}/2\mathbb{Z}$. Although they are different they are at least both finite. On the other hand if we desire for these groups to behave well under specialization it is clear that $\hat{\pi}_{1}^{\loc}(X_{x}\bs \Sing(X)_{x})$ is the better choice. \\
\indent The last type of point, where the real problem occurs, is the cone point $x \in CS$.  First consider the fundamental group $\hat{\pi}_{1}^{\loc}(X_{x}\bs\{x\})$. Then this will be isomorphic to $\hat{\pi}_{1}(S) \cong 0$ by the Lefschetz hyperplane theorem. In particular all the local fundamental groups defined in this sense are finite. So if this version of the theorem were true then any tower as above would stabilize. On the other hand $\hat{\pi}_{1}(S_{\reg})$ is infinite, giving an infinite tower of cones $X=CS \leftarrow CS_{1} \leftarrow CS_{2} \leftarrow \cdots$ Galois over $X$ and quasi-\'{e}tale. In particular finiteness of all the local fundamental groups $\hat{\pi}_{1}^{\loc}(X_{x}\bs \{x\})$ does not imply finiteness of the local fundamental groups $\hat{\pi}_{1}^{\loc}(X_{x}\bs\Sing(X)_{x})$. Note that in this case the singularity at the origin is not klt.
\end{example}

\begin{notation}
Given a finite morphism $f:Y \to X$ the branch locus, written $\Branch(f)$, is the locus over which $f$ fails to be \'{e}tale. A finite morphism $f:Y \to X$ is quasi-\'{e}tale if it is \'{e}tale in codimension $1$ or in other words the branch locus has codimension $\geq 2$. By purity of the branch locus for normal schemes quasi-\'{e}tale is equivalent to being \'{e}tale over the regular locus.
\end{notation}

\begin{acknow} I would like to thank my advisor J\'{a}nos Koll\'{a}r for his constant support. Also I would like to thank Ziquan Zhuang for several useful discussions. 
\end{acknow}

\section{Local Fundamental Groups}

\indent In this section we review some basic facts and definitions about the fundamental groups  we will be considering. 

\begin{definition} Suppose that $(R,\ger{m})$ is a strictly Henselian local normal domain and that $Z \subseteq \Spec(R)$ is a closed subset of codimension $\geq 2$. Then we define the algebraic local fundamental with respect to $Z$ to be $\pi_{1}^{\et}(\Spec(R)\bs Z)$. If $x$ is a normal geometric point of an irreducible scheme $X$ and $Z \subseteq X$ has codimension $\geq 2$, we define the local space $X_{x} = \Spec(\mathcal{O}_{X,x}^{\sh})$, the spectrum of the strict Henselization of the local ring. This comes with a map $\iota:X_{x} \to X$ and we define $Z_{x} = \iota^{-1}(Z)$. We then define the local fundamental group at $x$ with respect to $Z$ to be the algebraic local fundamental group of the strict Henselization of  $\mathcal{O}_{X,x}$ with respect to the closed set $Z$ and use the notation $\hat{\pi}_{1}^{\loc}(X_{x}\bs Z_{x})$. For any geometric point $x \in Z$ we define $G_{x} := \im[\hat{\pi}_{1}^{\loc}(X_{x}\bs Z_{x}) \to \hat{\pi}_{1}(X\bs Z)]$, the obstructions occurring in theorem 1. 
\end{definition}

\begin{note}
The definition above depends on a choice of both strict Henselization (requiring a choice of separable closure of the residue field of $x$) and a choice of base point. In particular the choice of base point implies that the groups $G_{x}$ are only defined up to conjugacy. It is for this reason that we need to take Galois morphisms in the main theorem. For counterexamples when the morphisms are not Galois see \cite{GPK16}. Also note the difference between the definition here and that used in \cite{X14}. 
\end{note} 

\indent The following basic lemma shows the purpose in using Henselizations when defining the local fundamental group. \\

\begin{lem} [\cite{bhatt16} Claim 3.5] Suppose that $f:Y \to X$ is a quasi-\'{e}tale morphism of normal schemes, that is \'{e}tale away from some subset $Z$ of codimension $\geq 2$. Then $f$ is \'{e}tale over a geometric point $x \in X$ if and only if the pull back of the map to $U_{x}: = X{x}\bs Z_{x}$ is trivial. 
\end{lem}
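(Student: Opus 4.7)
The plan is to handle both directions by pulling back to the local scheme $X_{x}$ and exploiting that it is strictly Henselian and normal. The forward direction is nearly immediate: if $f$ is \'{e}tale at $x$, then the base change $f_{x}:Y_{x}\to X_{x}$ is a finite \'{e}tale morphism over a strictly Henselian local scheme, hence is a trivial disjoint union of copies of $X_{x}$, and restricting this triviality to $U_{x}$ gives the desired trivialization.

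The reverse implication is where the real work lies, and I would approach it as follows. Write $A = \mathcal{O}_{X,x}^{\sh}$, $Y_{x} = \Spec B$ with $B$ finite over $A$, and $V_{x}:=Y_{x}\times_{X_{x}}U_{x}$. First I would verify that $Y_{x}$ is normal and that $f^{-1}(Z_{x})$ has codimension $\geq 2$ in $Y_{x}$: the former because strict Henselization is a regular morphism on an excellent base and hence preserves normality, the latter because $f_{x}$ is finite. Next, the assumed trivialization $V_{x}\cong U_{x}\sqcup\cdots\sqcup U_{x}$ yields $n$ orthogonal idempotents $e_{i}\in\Gamma(V_{x},\mathcal{O})$. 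By Hartogs applied to the normal scheme $Y_{x}$ with the codimension-$2$ closed subset $Y_{x}\bs V_{x}$, each $e_{i}$ extends uniquely to an idempotent $\tilde{e}_{i}\in B$, producing a product decomposition $B = B_{1}\times\cdots\times B_{n}$ with $B_{i}|_{U_{x}}\cong\mathcal{O}_{U_{x}}$.

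To finish, I would identify each $B_{i}$ with $A$. Each $B_{i}$ is a finite $A$-algebra and, viewed as an $A$-module, inherits the $S_{2}$ property from normality of $Y_{x}$ together with finiteness of $f_{x}$. One further application of Hartogs then gives
\[
B_{i} \;=\; \Gamma(U_{x},B_{i}|_{U_{x}}) \;=\; \Gamma(U_{x},\mathcal{O}_{U_{x}}) \;=\; A,
\]
so $B\cong A^{n}$ and $Y_{x}\to X_{x}$ is \'{e}tale, which descends to \'{e}taleness of $f$ at $x$ via the standard characterization of \'{e}taleness after passage to the strict Henselization. The main technical hurdle I anticipate is the commutative-algebra bookkeeping in this last step: one needs that strict Henselization preserves normality and that $S_{2}$ for a finite module over a local ring can be read off from the depths at the finitely many maximal ideals lying above; once those facts are in hand, Hartogs does all the geometric work.
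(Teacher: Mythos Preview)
Your proof is correct and takes essentially the same approach as the paper's: reduce to the strict Henselization, use that finite \'{e}tale covers of a strictly Henselian local scheme are trivial for the forward direction, and invoke normality to extend the trivialization from $U_{x}$ to all of $X_{x}$ for the converse. The paper's proof is a two-line sketch whose phrase ``since the varieties are normal'' is precisely what your Hartogs/idempotent-extension argument makes rigorous.
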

\begin{proof}
It is enough to prove that the map is \'{e}tale once we pull back to the strict Henselization of the local ring. Now in this case if the map $f$ is \'{e}tale then it induces a trivial cover of $\Spec(\mathcal{O}_{X,x}^{sh})$ and hence of the open set $V$. On the other hand if the cover of $U_{x}$ is trivial, then so is the cover of $\Spec(\mathcal{O}_{X,x}^{sh})$ since the varieties are normal. Hence  the morphism is \'{e}tale. 
\end{proof}

\section{The Branch Locus of a Quasi-\'{E}tale Morphism}

\indent In this section we consider the question of where a quasi-\'{e}tale cover of a normal variety $X$ branches. We will see that there are finitely many locally closed subsets of $X$ such that any branch locus is the union of some subcollection of these subsets. Moreover we will show that such a stratification is possible to compute in terms of any alteration. We start with the criterion for telling if a morphism is \'{e}tale from an alteration. \\

\noindent \textbf{Alterations.} \cite{dJ96} A map $\pi:\tilde{X} \to X$ is an \textit{alteration} if it is proper, dominant and generically finite. A regular alteration will be an alteration where $\tilde{X}$ is regular. In his work, de Jong showed that regular alterations exist for noetherian schemes of finite type over an excellent base of dimension $\leq 2$. We will say that a divisor $E \subset \tilde{X}$ is \textit{exceptional} if $\pi(E)$ has codimension at least $2$ on $X$. \\

\begin{lem} Suppose that $X$ is normal Noetherian scheme, with a regular alteration $\pi:\tilde{X} \to X$. Let $f:Y \to X$ be a finite morphism of Noetherian schemes, and denote by $\tilde{f}:\tilde{Y} \to \tilde{X}$ the normalized fiber product of the maps. Then  $f$ is \'{e}tale if and only if $\tilde{f}$ is \'{e}tale and for any geometric point $x \in X$, $\tilde{f}$ induces a  trivial cover of $\pi^{-1}(x)$. 
\end{lem}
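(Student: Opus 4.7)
For the forward direction, assume $f$ is \'{e}tale. Since \'{e}tale morphisms are preserved under base change, $Y \times_{X} \tilde{X} \to \tilde{X}$ is \'{e}tale, and as $\tilde{X}$ is regular this base change is also regular, hence normal. Therefore $\tilde{Y} = Y \times_{X} \tilde{X}$ with no normalization needed, and $\tilde{f}$ is \'{e}tale. The geometric fiber $\tilde{Y} \times_{\tilde{X}} \pi^{-1}(x)$ equals $f^{-1}(x) \times_{k(x)} \pi^{-1}(x)$; since $f^{-1}(x) \to \Spec k(x)$ is a trivial cover for a geometric point $x$, this is a trivial cover of $\pi^{-1}(x)$.

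For the reverse direction, I would reduce to the strictly henselian local case by passing to $X_{x} = \Spec \mathcal{O}_{X,x}^{\sh}$ at a geometric point $x \in X$; all hypotheses (regularity of the alteration, \'{e}taleness of $\tilde{f}$, triviality of the cover on $\pi^{-1}(x)$) are preserved under this \'{e}tale base change. Assuming now that $X$ is strictly henselian with closed geometric point $x_{0}$, Artin's proper base change theorem produces an equivalence between finite \'{e}tale covers of $\tilde{X}$ and finite \'{e}tale covers of the closed fiber $\pi^{-1}(x_{0})$ via restriction. Since $\tilde{f}$ is finite \'{e}tale with trivial restriction to $\pi^{-1}(x_{0})$ by hypothesis, it must itself be a trivial cover: $\tilde{Y} \cong \coprod_{i=1}^{d} \tilde{X}$ as $\tilde{X}$-schemes, with $d = \deg f = \deg \tilde{f}$.

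The final step is to descend this trivialization from $\tilde{X}$ to $X$ and deduce $Y \cong \coprod_{i=1}^{d} X$, which yields that $f$ is \'{e}tale. From the identity $K(\tilde{Y}) = K(Y) \otimes_{K(X)} K(\tilde{X}) = K(\tilde{X})^{d}$, the algebra $K(Y)$ is \'{e}tale over $K(X)$ of degree $d$; the $d$ components of $\tilde{Y}$ project to $Y$, and via Stein factorization applied to these proper maps, together with the fact that $X$ is strictly henselian so that idempotents lift from fibers, one extracts $d$ sections of $f$ and concludes $f_{\ast} \mathcal{O}_{Y} = \mathcal{O}_{X}^{d}$. This last descent step is the main obstacle: the alteration $\pi$ is not flat in general, so one cannot naively descend \'{e}taleness along it, and the correct way to propagate triviality from $\tilde{X}$ to $X$ is through the function-field identity above together with a careful use of Stein factorization and the henselian hypothesis.
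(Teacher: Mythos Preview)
Your forward direction is correct and essentially identical to the paper's.

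For the reverse direction you take a more elaborate route than the paper and leave the key step unfinished. You reduce to the strictly henselian case, invoke proper base change to globalize the triviality of $\tilde f$ to $\tilde Y\cong\coprod^{d}\tilde X$, and then attempt to descend this splitting to $Y$. You yourself flag the descent as ``the main obstacle'' and do not carry it out; the sketch via function fields and Stein factorization is suggestive but not a proof. In particular, $\pi$ is not flat and $Y$ is not assumed normal, so the passage from $K(\tilde Y)=K(\tilde X)^{d}$ and the existence of $d$ proper maps $\tilde X\to Y$ to an actual decomposition $Y\cong\coprod^{d}X$ needs a genuine argument that you have not supplied.

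The paper avoids this descent entirely. Its reverse direction is a one-line fiber count: from the hypothesis that $\tilde f$ is \'etale and trivial over each $\pi^{-1}(x)$ one reads off directly that the geometric fiber $f^{-1}(x)$ has $\deg(f)$ connected components, and then cites a standard criterion (\cite{mumfordoda}, V.7) that a finite morphism with this property is \'etale at $x$. No henselization, no proper base change, no descent. Your machinery is not wrong, but it is heavier than needed and leaves exactly the hard step open; the direct fiber-counting argument closes the proof immediately.
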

\begin{proof}
First suppose that $f:Y \to X$ is \'{e}tale. Then  the base change $Y \times_{X} \tilde{X} \to \tilde{X}$ is \'{e}tale, so that the fiber product was already normal. Hence it follows $\tilde{f}$ is \'{e}tale. Then since $\tilde{Y}$ is just the fiber product and $f$ is \'{e}tale,  for any of the $d$ points $q \in Y$ mapping to $p \in X$ we see that $\sigma^{-1}(q)  \cong \pi^{-1}(p) \times_{k(p)} k(q)$. Therefore $\tilde{f}$ is just the trivial degree $d$ cover on every fiber $\pi^{-1}(p)$. \\
\indent Now suppose that $\tilde{f}$ is \'{e}tale and induces a trivial cover on every fiber of $\pi$. Then in particular for any $x \in X$,  $f^{-1}(x)$ will have $\deg(f)$ geometric connected components. In particular it must be \'{e}tale at $x$ (\cite{mumfordoda}, V.7).
\end{proof}

\begin{example}
Each of the two conditions in the lemma above are easily seen to be necessary.  For example we can let $X$ be the cone over a smooth conic. This has a quasi-\'{e}tale double cover $f:\mathbb{A}^{2} \to X$. Blowing up the origins gives a map of the normalized fiber-products $\BL_{0}\mathbb{A}^{2} \to \BL_{0}X$, that will ramify along the exceptional divisors.  \\
\indent On the other hand we can take the $X$ to be the cone over an elliptic curve $E$. Take an \'{e}tale cover $E' \to E$, which will induce a quasi-\'{e}tale cover $X' \to X$ of their cones. After blowing up the origins we obtain the map of normalized fiber products which is \'{e}tale, but induces a nontrivial cover of the exceptional divisors. 
\end{example}

\indent Using the above lemma we can check whether $f$ is \'{e}tale based on a single regular alteration. Our next goal will be to show that based on this alteration we really only need to check that $f$ is \'{e}tale at finitely many points. To identify what are the points we need to check we require the following condition, which roughly says that the reduced fibers of a morphism fit together in a flat family.  \\

\noindent \textbf{Condition $\ast$.} Suppose that $g:Z \to S$ is a proper  morphism of Noetherian schemes with $S$ integral. Then $g$ satisfies this condition if there exists a purely inseparable morphism $i:S' \to S$ such that if $Z' = Z\times_{S} S' \to S'$ is the base change, then $Z_{\text{red}}' \to S'$ is flat with geometrically reduced fibers. \\

\indent We now show that there exists a stratification of $X$ such that $\pi$ will satisfy the above  condition $\ast$ over each of the strata. \\

\begin{lem} Let $\pi:\tilde{X} \to X$ be a morphism of Noetherian schemes. Then there exists a stratification $X = \bigcup S_{i}$, where the $S_{i}$ are irreducible locally closed subsets,  such that $\pi^{-1}(S_{i}) \to S_{i}$ satisfies condition ($\ast$) for all $i$. 
\end{lem}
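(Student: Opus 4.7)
The plan is to proceed by Noetherian induction on $X$: if I can produce a dense locally closed irreducible subset $U \subseteq X$ over which $\pi$ satisfies condition $(\ast)$, then iterating on the closed complement gives the desired stratification. So I reduce immediately to the case where $X$ is integral with generic point $\eta$, and aim for a dense open $U \subseteq X$.

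The key input I would invoke is a standard fact about schemes over a field: for any finite-type scheme $Y$ over a field $k$ there exists a finite purely inseparable extension $L/k$ such that $(Y \otimes_{k} L)_{\text{red}}$ is geometrically reduced over $L$ (take $L$ to absorb the inseparable parts of the residue fields at the generic points of $Y$; in characteristic $0$ this is vacuous). Applying this to $Y = \tilde{X}_{\eta}$ and $k = k(\eta)$ produces the extension $L/k(\eta)$ I will use.

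Next I would spread this out. Choose an affine open $\Spec(A) \subseteq X$ containing $\eta$ and a finite purely inseparable inclusion of domains $A \hookrightarrow A'$ whose fraction-field extension is $k(\eta) \hookrightarrow L$ (adjoin finitely many $p^{n}$-th roots in characteristic $p$). Set $S' = \Spec(A')$, so that $S' \to \Spec(A)$ is a universal homeomorphism. Form $Z' := \tilde{X} \times_{X} S'$ and look at $Z'_{\text{red}} \to S'$. Because taking reduced structure commutes with localization at a point, the generic fiber of $Z'_{\text{red}} \to S'$ is $(\tilde{X}_{\eta} \otimes_{k(\eta)} L)_{\text{red}}$, which is geometrically reduced by the choice of $L$. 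I then apply generic flatness to $Z'_{\text{red}} \to S'$ to get a dense open $V' \subseteq S'$ over which the map is flat, and invoke the openness of the locus with geometrically reduced fibers for flat morphisms of finite type (EGA IV, 12.2.4 / 9.7.7) to shrink $V'$ until every fiber is geometrically reduced. Transporting $V'$ back along the homeomorphism $S' \to \Spec(A)$ gives an open $U \subseteq X$ over which $(\ast)$ holds, with $V' \to U$ as the required purely inseparable cover. Noetherian induction on $X \setminus U$ completes the proof.

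The main obstacle I expect is the last spreading-out step: getting from a geometrically reduced generic fiber to fiberwise geometric reducedness on an open. This uses two nontrivial ingredients in tandem — generic flatness, and the openness of geometrically reduced fibers for flat finite-type families — and it implicitly depends on $\pi$ being of finite type (which is the intended setting, since the lemma is applied to alterations, even though the statement only says ``morphism of Noetherian schemes''). Everything else — picking the purely inseparable extension, spreading it to a finite purely inseparable morphism of integral schemes, and checking that reduction commutes with the relevant localizations — is routine.
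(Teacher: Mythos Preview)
Your proposal is correct and follows essentially the same route as the paper: Noetherian induction on $X$, reduce to an integral base, pass to a purely inseparable extension so that the reduced total space has geometrically reduced generic fiber, then invoke generic flatness together with openness of the geometrically-reduced-fibers locus to obtain the desired dense open stratum. The paper phrases the purely inseparable step as iterating Frobenius on the base until each component of $\pi^{-1}(S)_{\text{red}}$ is generically separable over $S$, which amounts to the same construction you spell out via the field extension $L/k(\eta)$; your explicit citation of the EGA spreading-out results and your remark that a finite-type hypothesis on $\pi$ is implicitly required are both on point.
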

\begin{proof}
We will proceed by Noetherian induction on $X$. Take an irreducible component $S$  of $X$. Consider the map $\pi^{-1}(S)_{\text{red}} \to S$. Taking an irreducible component  $W$ of $\pi^{-1}(S)_{\text{red}}$  if $W \to S$ is not separable, we can take some high enough power of the Frobenius so that the pullback by the map is separable. Doing this for every irreducible component of $\pi^{-1}(S)_{\text{red}}$ we may assume that the general fiber is reduced. Then taking an open subset $U$ of $S$ we may assume that every fiber of $\pi^{-1}(U)_{\text{red}} \to U$ is reduced and that this morphism is flat. Continuing on will give the desired stratification.
\end{proof}

\begin{lem}  [e.g. \cite{EGAIII} 7.8.6] Suppose that $g:Z \to S$ is a  morphism of Noetherian schemes satisfying condition ($\ast$) with $S$ integral. Then the number of connected components of geometric fibers are constant. 
\end{lem}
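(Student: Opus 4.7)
The plan is to reduce to the case where $g: Z \to S$ is already proper, flat, and has geometrically reduced fibers, and then to extract constancy of the number of geometric connected components from the local freeness of $g_{*}\mathcal{O}_{Z}$. The first step is to exploit the purely inseparable morphism $i: S' \to S$ supplied by condition ($\ast$). Since $i$ is a universal homeomorphism, every geometric point $\bar{s} \to S$ lifts to a geometric point $\bar{s}' \to S'$ and the geometric fibers $Z_{\bar{s}}$ and $Z'_{\bar{s}'}$ have the same underlying topological space. Passing from $Z'$ to $Z'_{\text{red}}$ kills only nilpotents and so again preserves the underlying space. Consequently the number of geometric connected components of $g$ above $\bar{s}$ matches that of $Z'_{\text{red}} \to S'$ above $\bar{s}'$, and since $i$ is purely inseparable, $S'$ remains integral. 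So I may henceforth assume $g$ itself is proper and flat with geometrically reduced fibers.

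Next, I would examine the coherent sheaf $\mathcal{F} := g_{*}\mathcal{O}_{Z}$. For a proper reduced scheme over an algebraically closed field, $H^{0}$ of the structure sheaf is a product of copies of the field indexed by the connected components; so if $\mathcal{F}$ is locally free of rank equal to $\dim_{k(\bar{s})} H^{0}(Z_{\bar{s}}, \mathcal{O}_{Z_{\bar{s}}})$, integrality (hence connectedness) of $S$ forces this rank to be constant and the conclusion follows. Local freeness of $\mathcal{F}$ is the content of standard cohomology and base change applied to the flat proper morphism $g$: reducedness of geometric fibers ensures surjectivity everywhere of the base change map $\phi^{0}(s): \mathcal{F}_{s} \otimes k(s) \to H^{0}(Z_{s}, \mathcal{O}_{Z_{s}})$, which together with upper semicontinuity of $h^{0}$ forces $h^{0}$ to be locally constant and $\mathcal{F}$ to be locally free. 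This is essentially the cited EGA III, 7.8.6.

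The main obstacle is precisely this cohomology and base change step: one must verify that geometrically reduced fibers is the correct hypothesis to prevent jumping of $h^{0}$ while retaining surjectivity of the base change map. The purely inseparable reduction in the first step is what gives us access to this hypothesis in the first place; once local freeness of $\mathcal{F}$ is in hand, the remainder of the argument is formal.
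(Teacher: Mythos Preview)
Your reduction via the purely inseparable base change is identical to the paper's, and your endpoint---local freeness of $g_*\mathcal{O}_Z$ with rank counting geometric connected components---is equivalent to what the paper proves. The difference is in how the post-reduction step is argued. Rather than invoking cohomology and base change directly on $g_*\mathcal{O}_Z$, the paper passes to the Stein factorization $Z \to \hat{S} \to S$ and shows $\hat{S} \to S$ is \'etale by base-changing to the strict Henselization of $\mathcal{O}_{S,s}$. Over such a base, connected components of $Z$ biject with those of the closed fiber $Z_0$; on each connected component $W$ the closed fiber $W_0$ is then connected and (geometrically) reduced, so $H^0(W_0,\mathcal{O}_{W_0})$ is the residue field, and Grauert's theorem gives $\mathcal{O}_S \xrightarrow{\sim} g_*\mathcal{O}_W$.

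The step you correctly flag as the obstacle---that geometric reducedness of fibers forces surjectivity of $\phi^0(s)$---does not fall out of semicontinuity alone, and your sketch does not actually supply it. The paper's Henselian reduction is precisely the missing ingredient: over a strictly Henselian local base the idempotents of $H^0(Z_0,\mathcal{O}_{Z_0})$ lift to idempotents of $H^0(Z,\mathcal{O}_Z)$ via the bijection on connected components, and reducedness of $Z_0$ guarantees these idempotents span $H^0(Z_0,\mathcal{O}_{Z_0})$. So your outline is correct, but the paper's Stein-factorization-plus-Henselization argument is exactly what justifies the surjectivity you assert; what you gain by phrasing it your way is a cleaner statement of what needs to be checked, while the paper's route makes the check explicit.
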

\begin{proof}
We have a purely inseparable morphism $S' \to S$ such that $Z' \to S'$ is flat with geometrically reduced fibers. Since $S' \to S$ is a universal homeomorphism, it follows that $Z' \to Z$ is a homeomorphism. Hence the number of connected components remains the same, so we can assume from the beginning that $Z \to S$ is flat with geometrically reduced fibers. \\
\indent Now in this case we will show that the Stein factorization of $g:Z \to S$ factors as $Z \to \hat{S} \to S$ where $\hat{S} \to S$ is \'{e}tale. Taking the strict Henselization of the local ring at any point we can reduce to the case where $S$ is the spectrum of a strictly Henselian local ring. In this case $\hat{S}$ is a product of finitely many local rings. Our goal is to show that these are isomorphic to $S$. Now consider a connected component $W$ of $Z$, so that the map $g:W \to S$ is flat and proper, with geometrically reduced fibers. Now since $W$ is connected and $S$ is the spectrum of strictly Henselian ring, the special fiber $W_{0}$ is also connected. But then since $W_{0}$ is reduced $H^{0}(W_{0},\mathcal{O}_{W_{0}}) = k(0)$. Hence we see by the theorem of Grauert that $\mathcal{O}_{S} \to g_{\ast}\mathcal{O}_{W}$ is an isomorphism. This implies that $\hat{S} \to S$ is thus \'{e}tale, so in particular the number of connected components of the geometric fibers are constant.
\end{proof}

\begin{theorem}Suppose that $X$ is a normal Noetherian scheme and  $\pi:\tilde{X} \to X$ a regular alteration. Then there exists a stratification $X = \bigcup_{i\in I}Z_{i}$ into locally closed subsets such that for any $f:Y \to X$ quasi-\'{e}tale, with $Y$ a normal Noetherian scheme,  $\Branch(f) = \bigcup_{i \in J \subset I}Z_{i}$. 
\end{theorem}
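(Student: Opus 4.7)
The plan is to construct the stratification by refining the $(\ast)$-stratification obtained above with the finite combinatorial data carried by the exceptional divisors of $\pi$; the \'{e}taleness criterion expressed in terms of an alteration then detects $\Branch(f)$ from this data.

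I first pin down where $\Branch(\tilde{f})$ can live. Since $f$ is \'{e}tale away from a closed set $Z\subseteq X$ of codimension $\geq 2$, the normalized base change $\tilde{f}:\tilde{Y}\to\tilde{X}$ is \'{e}tale over $\tilde{X}\bs \pi^{-1}(Z)$; as $\tilde{X}$ is regular, Zariski--Nagata purity forces $\Branch(\tilde{f})$ to be either empty or a divisor inside $\pi^{-1}(Z)$, and any such divisor is necessarily exceptional for $\pi$. Every exceptional divisor sits in the closed locus where $\pi$ has positive-dimensional fibers, and this locus has only finitely many codimension-one irreducible components $E_{1},\dots,E_{n}$ since $\tilde{X}$ is Noetherian; hence $\Branch(\tilde{f})=\bigcup_{j\in J_{f}}E_{j}$ for some $J_{f}\subseteq\{1,\dots,n\}$ depending on $f$.

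Next I construct the stratification. Apply the preceding stratification lemma to obtain $X=\bigsqcup S_{i}'$ on which $\pi$ satisfies $(\ast)$, and refine it by slicing with the finitely many closed sets $\pi(E_{1}),\dots,\pi(E_{n})$ and their complements, so that each resulting stratum $S_{i}$ is either contained in or disjoint from every $\pi(E_{j})$. Condition $(\ast)$ passes to locally closed subsets of a stratum, so the refinement still satisfies it.

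It now suffices to check, for a fixed quasi-\'{e}tale $f$, that $\Branch(f)\cap S_{i}$ is either all of $S_{i}$ or empty. By the local form of the \'{e}taleness criterion (applied over a Zariski neighborhood of $x\in S_{i}$), $f$ is \'{e}tale at $x$ iff (a) $\tilde{f}$ is \'{e}tale on a neighborhood of $\pi^{-1}(x)$, and (b) $\tilde{f}$ restricts to a trivial cover of $\pi^{-1}(x)$. Condition (a) at $x\in S_{i}$ is equivalent to $x\notin\bigcup_{j\in J_{f}}\pi(E_{j})$, which by construction is constant on $S_{i}$. Assuming (a) holds on $S_{i}$, the map $\tilde{f}$ is \'{e}tale over $\pi^{-1}(S_{i})$, so \'{e}tale base change preserves the flatness and geometric reducedness witnessing $(\ast)$, and $\tilde{f}^{-1}(\pi^{-1}(S_{i}))\to S_{i}$ again satisfies $(\ast)$; the connected-components lemma then yields that the number of geometric connected components of its fibers is constant on $S_{i}$, and the same is true for $\pi^{-1}(S_{i})\to S_{i}$. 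Since (b) asserts exactly that the first count is $\deg(f)$ times the second, (b) is also constant on $S_{i}$. The main obstacle is verifying $(\ast)$ for the \'{e}tale base change and converting ``constant number of components'' into the precise ``trivial cover'' condition, which demands careful bookkeeping with the non-reduced structure on the fibers $\pi^{-1}(x)$; I expect this to follow from the identity $T_{\text{red}}=T\times_{Z}Z_{\text{red}}$ for an \'{e}tale map $T\to Z$, but this is where the argument is most likely to need fine-tuning.
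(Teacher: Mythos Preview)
Your proposal is correct and follows essentially the same route as the paper: build the $(\ast)$-stratification, refine by the images $\pi(E_j)$ of the finitely many exceptional divisors, use Zariski--Nagata purity on $\tilde{X}$ to identify $\Branch(\tilde{f})$ as a union of exceptional divisors, and then invoke constancy of the number of geometric connected components over each stratum. The paper phrases the last step slightly differently (it first absorbs $B=\bigcup_{j\in J_f}\pi(E_j)$ into $\Branch(f)$ and then works on $X\bs B$), but this is exactly your dichotomy on condition~(a).

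The two points you flag as delicate are the right ones, and your proposed fixes are correct. The identity $T_{\text{red}}=T\times_{Z}Z_{\text{red}}$ for \'{e}tale $T\to Z$ holds because the right-hand side is a closed subscheme of $T$ with the same support which is reduced (being \'{e}tale over the reduced scheme $Z_{\text{red}}$); this is precisely what is needed to propagate $(\ast)$ along the \'{e}tale base change $\tilde{f}$. For the second point, an \'{e}tale cover of degree $d$ of a geometrically connected proper scheme has at most $d$ geometric connected components, with equality iff it is trivial; applying this componentwise turns your numerical equality into the triviality statement (b). The paper's proof passes over both of these points without comment, so your caution is well placed but no genuine obstacle remains.
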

\begin{proof}
The above lemma gives a stratification $X = \bigcup_{i} S_{i}$ such that $\pi^{-1}(S_{i}) \to S_{i}$ satisfies condition $\ast$. Moreover we a finite number of exceptional divisors $E_{i}$ giving closed subsets $\pi(E_{i})$ on $X$. Putting these together gives our desired stratification of $X$. Our goal is then to show that any branch locus of a quasi-\'{e}tale morphism is a union of these strata. \\
\indent Consider $\tilde{Y} = (\tilde{X} \times_{X} Y)^{n}$ the normalized fiber product which comes with a morphism $\tilde{f}:\tilde{Y} \to \tilde{X}$ that is \'{e}tale away from the exceptional locus. Now by purity of the branch locus  $\Branch(\tilde{f}) = \bigcup_{i} E_{i}$ where the $E_{i}$ are some subset of the exceptional divisors. In particular the branch locus of $f$ will include $B=\bigcup_{i} \pi(E_{i})$, which will be a union of some strata. Now looking on the complement of $B$, and replacing $X$ by $X\bs B$ we can assume that $\tilde{f}$ is in fact \'{e}tale. In particular  $\tilde{f}^{-1}(\pi^{-1}(S_{i})) \to \pi^{-1}(S_{i}) \to S_{i}$ satisfies condition $\ast$. Hence  the number of connected components of the fibers are constant. This implies that for any point $s \in S_{i}$ that if the cover of $\pi^{-1}(s)$ is geometrically trivial, then the corresponding cover for other point in $S_{i}$ is also trivial. Hence we see that the branch locus must be a union of the strata. 
\end{proof}

\begin{remark} In the proof of (i) implying (iii) of the main theorem it  would be nice to apply this theorem directly on $X$. However when we take the normalized pullback of an alteration we may not get another alteration. To remedy this we will need to take alterations of varieties that are further along in the tower. 
\end{remark}

\section{Proof of the Main Theorem}

\indent In this section we prove the different implications in the main theorem.   \\

\noindent \textbf{(i) $\imp$ (ii).}
\begin{proof}
 Consider a regular alteration $\pi:\hat{X} \to X$. This will give us a stratification $X = \bigcup_{i} Z_{i}$. Now for each of the finitely many generic points $\eta_{i}$ of the different strata consider the finitely many finite groups $G_{i} = G_{\eta_{i}}$. Then as $\pi_{1}^{\et}(U)$ is profinite there exists some finite index closed normal subgroup $H$ intersecting all of these $G_{i}$ trivially. This corresponds to a quasi-\'{e}tale cover $\gamma:Y \to X$ that is \'{e}tale over $U$. Moreover by our choice of stratification for any geometric point $x$ we will also have that $G_{x} \cap H$ is trivial. Hence such a finite index normal subgroup $H$ can be taken uniformly for all $x \in X$. 
\end{proof}

\noindent \textbf{(ii) $\imp$ (i).} \\
\begin{proof}
 Our assumption (ii) gives a closed finite index normal subgroup $H \subseteq \pi_{1}^{\et}(U)$ such that $G_{x} \cap H = \{1\}$. Then in particular $G_{x} \cong G_{x}/G_{x} \cap H \subseteq \pi_{1}^{\et}(U)/H$ which is finite. Hence $G_{x}$ is finite as well.
\end{proof}

\noindent \textbf{(i) $\imp$ (iii).} \\
\begin{proof}
We  proceed by Noetherian induction. Consider our tower of finite morphisms denoted by $\gamma_{k}:X_{k+1} \to X_{k}$, and consider the collection $\mathcal{U}$ of open sets $U \subseteq X$ such that when we restrict the tower over $U$ the morphisms are eventually \'{e}tale. The assumption that all the morphisms are quasi-\'{e}tale implies that $X_{\reg} \in \mathcal{U}$. Since $X$ is assumed to be Noetherian this collection has a maximal element and our goal is to show that this must be all of $X$. \\
\indent Therefore we need to show that if $U \in\mathcal{U}$ and $U \neq X$ then we can find a larger $U' \in \mathcal{U}$. To do this take any $x$ a generic point of an irreducible component of $X\bs U$. Consider $X_{x} = \Spec(\mathcal{O}_{X,\eta}^{\sh})$ and restrict the tower of $X_{i}$ over $X_{x}$ to get a tower
\[\Spec(\mathcal{O}_{X,\eta}^{\sh}) = X_{x,0} \leftarrow X_{x,1} \leftarrow X_{x,2} \leftarrow X_{x,3} \leftarrow \cdots\]
 Now using the assumption (i) applied to the point $x$, it follows that eventually the covers will be trivial when restricted over the regular locus and hence will be \'{e}tale. This then shows that there exists some $N >> 0$ such that $\gamma_{n}$ is \'{e}tale over $\eta$ for $n \geq N$ and they are \'{e}tale over the open set $U$ coming from Noetherian induction.\\
\indent Now take a regular alteration  $\pi:\hat{X}_{N} \to X_{N}$. Then using $\pi$ we construct a stratification $X_{n} = \bigcup_{i}Z_{i}$ as before. Then any of the maps $X_{N+k}  \to  X_{N}$ must be \'{e}tale over $U$ and $\eta$. But because the branch locus must be a union of strata it follows that  these are all \'{e}tale over some open set $U' \supset U$ with $U' \ni \eta$. Hence such a larger $U' \in \mathcal{U}$ exists and by Noetherian induction we see that $X \in \mathcal{U}$. This proves property (iii).
\end{proof}

\noindent \textbf{(iii) $\imp$ (iv).} 
\begin{proof}
Assuming that no such cover exists, we inductively construct a tower $X \leftarrow X_{1} \leftarrow X_{2} \leftarrow X_{3} \leftarrow \cdots$ as in (iii) of the main theorem using Galois closures, such that none of the $X_{i+1} \to X_{i}$ are \'{e}tale. This will contradict our assumption, so eventually  every \'{e}tale cover of one of the $X_{i,\reg}$ will extend to an \'{e}tale cover of $X_{i}$. This gives the desired cover satisfying purity. 
\end{proof}

\noindent \textbf{(iv) $\imp$ (i).}
\begin{proof}
Consider a geometric point $x$ of  $X$. Take a cover $f:Y \to X$ as in (iv), and a geometric point $y$ of $Y$ mapping to $x$. Denote by $U$ the regular locus of $X$ and $Z = X\bs U$ the singular locus.  This gives rise to the following commutative diagram of fundamental groups. 
\[\begin{CD}
\hat{\pi}_{1}^{\loc}(Y_{y}\bs f^{-1}(Z)_{y}) @>>> \hat{\pi}_{1}(f^{-1}(U)) \\ 
@VVV @VVV \\
\hat{\pi}_{1}^{\loc}(X_{x} \bs Z_{x}) @>>> \hat{\pi}_{1}(U)
\end{CD}\]
Now the assumption on $Y$ implies that the top map is zero. On the other hand, the image of the map on the left is a finite index normal subgroup. Hence looking at the images in $\hat{\pi}_{1}(U)$, we see that $G_{x}$ has a trivial finite index subgroup and hence must be finite. 
\end{proof}

\section{Applications}

\indent Using our main theorem we can recover the results of \cite{GPK16} and \cite{bhatt16}. \\

\begin{corollary}
Suppose that $X$ is a normal klt variety over $\mathbb{C}$. Then $X$ satisfies the condition (ii). 
\end{corollary}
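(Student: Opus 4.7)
The plan is to apply Theorem 1 and verify condition (iii); (ii) will then follow by the equivalence. The two inputs from klt theory are: (a) the klt property is preserved under finite quasi-\'etale covers, a consequence of the ramification formula $K_Y = f^\ast K_X$ for such $f$ together with the definition of discrepancies; and (b) Xu's theorem \cite{X14}, that for every klt singularity $(X,x)$ over $\mathbb{C}$ the small local \'etale fundamental group $\hat{\pi}_1^{\loc}(X_x \bs \{x\})$ is finite.

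As observed after Example 2, Xu's bound is on the \emph{smaller} local fundamental group, not the larger $\hat{\pi}_1^{\loc}(X_x \bs Z_x)$ used in condition (i). I would therefore not try to deduce (i) directly; instead I would adapt the proof of (i) $\imp$ (iii). Given a tower $X = X_0 \leftarrow X_1 \leftarrow X_2 \leftarrow \cdots$ of Galois quasi-\'etale covers, (a) makes every $X_i$ klt. Let $\mathcal{U}$ be the collection of open sets $U \subseteq X$ over which the tower is eventually \'etale; then $X_{\reg} \in \mathcal{U}$ by purity, so Noetherian-ness provides a maximal $U \in \mathcal{U}$. Assume for contradiction $U \neq X$ and pick a generic point $\eta$ of an irreducible component $V$ of $X \bs U$. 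Two observations combine to confine the branching to $\{\eta\}$: first, no irreducible component of $X \bs U$ other than $V$ can contain $\eta$ (by maximality of irreducible components), so $X \bs U$ pulls back to the closed point $\{\eta\}$ in the strict localization $X_\eta$; and second, any component of $\Sing(X)$ that strictly contains $V$ has its generic point in $U$, so the tower is eventually \'etale there by the definition of $\mathcal{U}$. Together, for $i$ sufficiently large the restricted covers $X_i|_\eta \to X_\eta$ are \'etale on $X_\eta \bs \{\eta\}$, hence classified by finite quotients of $\hat{\pi}_1^{\loc}(X_\eta \bs \{\eta\})$; by Xu applied to the klt local scheme $X_\eta$, this last group is finite. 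A pigeonhole argument stabilizes the restricted tower, and Lemma 2 then gives that $X_{i+1} \to X_i$ is \'etale at the preimages of $\eta$ for $i \gg 0$. Theorem 2, applied to a regular alteration of $X_N$ for $N$ sufficiently large, now spreads \'etaleness from $U \cup \{\eta\}$ out to a strictly larger open set $U' \in \mathcal{U}$, contradicting maximality.

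The main obstacle is the reduction to the \emph{small} local fundamental group at $\eta$: one has to show that for $i \gg 0$, the only possible branching of $X_i|_\eta \to X_\eta$ is at $\{\eta\}$ itself, not along the full singular locus $Z_\eta \supseteq \{\eta\}$. This combines the maximality of irreducible components (which handles the pulled-back piece $(X \bs U)_\eta$) with the inductive hypothesis (which handles any ``extra'' singular strata through $\eta$ whose generic points lie in $U$). Condition (a) ensures that every $X_i$ is klt, so that the same argument can be iterated when Xu must be invoked at later stages of the Noetherian induction inside a higher cover $X_N$ when constructing the stratification of Theorem 2.
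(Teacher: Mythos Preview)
Your overall strategy coincides with the paper's second route: rather than strengthening Xu's theorem to the larger groups $\hat{\pi}_1^{\loc}(X_x \bs Z_x)$, you exploit that the klt class is stable under quasi-\'etale covers and rerun the Noetherian-induction argument using only the small groups $\hat{\pi}_1^{\loc}(X_x \bs \{x\})$. This is precisely the mechanism the paper isolates (a class $\mathcal{R}$ closed under quasi-\'etale covers with finite small local fundamental groups) and attributes to \cite{GPK16}. The paper also records an alternative you do not pursue: one can instead upgrade Xu's bound to the larger group directly, after which condition (i) holds verbatim and Theorem 1 applies without modification.

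There is, however, a genuine gap in your execution. You assert that for $i \gg 0$ the composite $X_i|_\eta \to X_\eta$ is \'etale over $X_\eta \bs \{\eta\}$, and then invoke Xu at $\eta \in X$. But your two observations do not give this. Observation 1 correctly shows $(X \bs U)_\eta = \{\eta\}$, so the \emph{successive} maps $X_{i+1}|_\eta \to X_i|_\eta$ are \'etale over $X_\eta \bs \{\eta\}$ once $i \ge N$. It says nothing about the first $N$ steps: the composite $X_N \to X$ may branch along $Z_\eta \bs \{\eta\}$ whenever some component of $\Sing(X)$ strictly contains $V$, and this branching persists in every $X_i \to X$ with $i \ge N$. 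Observation 2, that the generic points of such larger singular components lie in $U$, only says the \emph{tower} is eventually \'etale there, not that $X_N \to X$ already is. So the covers you feed into Xu are not covers of $X_\eta \bs \{\eta\}$ but of $X_\eta \bs Z_\eta$, and you are back to the large local fundamental group.

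The fix is to apply Xu one level up. Pass to the truncated tower $X_N \leftarrow X_{N+1} \leftarrow \cdots$, where every map is \'etale over the preimage $U_N \subseteq X_N$ of $U$. For a preimage $\eta' \in X_N$ of $\eta$, finiteness of $X_N \to X$ forces $\eta'$ to be a generic point of $X_N \bs U_N$, so $(X_N \bs U_N)_{\eta'} = \{\eta'\}$ and the composites $X_i|_{\eta'} \to (X_N)_{\eta'}$ are genuinely \'etale on $(X_N)_{\eta'} \bs \{\eta'\}$ for all $i \ge N$. Now Xu applies at $(X_N,\eta')$, and \emph{this} is where condition (a) is essential: it guarantees $(X_N,\eta')$ is still klt. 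You do invoke (a), but only to justify building the Theorem 2 stratification on $X_N$; its more crucial role is to license Xu on the cover rather than on $X$ itself.
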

\begin{proof}
We want to show that $X$ satisfies condition (i). There are two issues to deal with if we wish to apply Xu's result \cite{X14}. First is the problem that in this paper the local fundamental groups are defined in terms of links instead of the local spaces $X_{x}$ given by Henselization. The second is that Xu proves the finiteness of $\hat{\pi}_{1}^{\loc}(X_{x}\bs\{x\})$ and we saw that this is not enough to guarantee (ii) in general. \\
\indent There are two ways to get around this. The first is to strengthen the result of Xu to prove the finiteness of algebraic local fundamental groups as considered in this paper. In the proof of his main theorem, Xu cuts down to a surface. It is then possible to consider only quasi-\'{e}tale covers instead of \'{e}tale covers of $X_{x}\bs \{x\}$, as these will agree after cutting down. Also you would need an equivalence of the algebraic local fundamental group defined in terms of links and Henselizations. Once this is done though (ii) will follow immediately from the main theorem. Note that also the recent result of \cite{bhatt2017} is strong enough to apply directly. \\
\indent The second way to prove this is to note that we can get around the issue of which fundamental group we consider when we work in a class of normal varieties $\mathcal{R}$ satisfying the following. We want for every $X \in \mathcal{R}$, and every quasi-\'{e}tale cover $Y \to X$ that $Y \in \mathcal{R}$, and also for every $x$ a geometric point of $X \in \mathcal{R}$ that $\hat{\pi}_{1}^{\loc}(X_{x}\bs{x}\}$ is finite. In particular klt singularities satisfy both these conditions by \cite{X14}. Then under these assumptions, the same argument for (i) implies (ii) works with the fundamental groups $\hat{\pi}_{1}^{\loc}(X_{x}\bs{x}\}$. This approach is used in \cite{GPK16}.
\end{proof}

\begin{corollary}
Suppose that $X$ is a normal $F$-finite strongly $F$-regular variety over a field of characteristic $p$. Then $X$ satisfies the condition (ii).
\end{corollary}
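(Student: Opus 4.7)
The plan is to follow the two-pronged strategy of the previous corollary, substituting Xu's theorem with the analogous finiteness result for strongly $F$-regular singularities due to Carvajal-Rojas, Schwede, and Tucker \cite{CST16}. By the main theorem it is enough to verify condition (i), and (i) will then imply (ii).

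I would first introduce the class $\mathcal{R}$ of $F$-finite normal strongly $F$-regular varieties over the given field of characteristic $p$. The key structural fact to invoke is that $\mathcal{R}$ is closed under quasi-\'etale covers: if $X \in \mathcal{R}$ and $f:Y \to X$ is a finite quasi-\'etale cover with $Y$ normal, then $Y \in \mathcal{R}$. This is standard in the theory of $F$-singularities, following from the transformation behavior of test ideals (equivalently, the splinter property) under finite morphisms that are \'etale in codimension one. With that in hand, I would then quote \cite{CST16} to conclude that for every $X \in \mathcal{R}$ and every geometric point $x \in X$, the punctured-spectrum local fundamental group $\hat{\pi}_{1}^{\loc}(X_{x}\bs\{x\})$ is finite.

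Combining these two ingredients puts us in exactly the setting in which the second approach to the klt corollary works: $\mathcal{R}$ is a class of normal varieties closed under quasi-\'etale covers and every member has finite punctured-spectrum local fundamental groups. The argument for (i) $\imp$ (ii) from the main theorem then goes through with $\hat{\pi}_{1}^{\loc}(X_{x}\bs\{x\})$ in place of $\hat{\pi}_{1}^{\loc}(X_{x}\bs Z_{x})$, because at every intermediate cover produced by the stratification argument we stay inside $\mathcal{R}$ and so retain the needed finiteness.

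The main obstacle is precisely the tension illustrated by Example 2: the result of \cite{CST16} controls the smaller group $\hat{\pi}_{1}^{\loc}(X_{x}\bs\{x\})$, not the larger group $\hat{\pi}_{1}^{\loc}(X_{x}\bs Z_{x})$ literally appearing in condition (i), and these can differ drastically in general. Everything therefore depends on the stability of strong $F$-regularity under quasi-\'etale covers, which is what licenses the swap between the two notions of local fundamental group. One could instead try to upgrade \cite{CST16} directly to bound the larger group (paralleling the first approach in the klt corollary), but routing through $\mathcal{R}$ and its closure under quasi-\'etale covers is the cleaner path.
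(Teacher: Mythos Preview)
Your argument is valid, but it takes an unnecessary detour compared with the paper. The paper's proof is one sentence: the result of \cite{CST16} applies \emph{directly} to condition (i) of the main theorem, with no modification needed. The point is that the $F$-signature argument in \cite{CST16} bounds the degree of any finite quasi-\'etale cover of $\Spec(\mathcal{O}_{X,x}^{\sh})$, and quasi-\'etale covers are precisely the \'etale covers of the regular locus $X_{x}\setminus Z_{x}$. Hence \cite{CST16} already controls the larger group $\hat{\pi}_{1}^{\loc}(X_{x}\setminus Z_{x})$, not merely the punctured-spectrum group $\hat{\pi}_{1}^{\loc}(X_{x}\setminus\{x\})$. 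So what you identify as ``the main obstacle'' does not arise here; the tension of Example~2 is a feature of Xu's klt result, not of \cite{CST16}.

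Your route through the class $\mathcal{R}$ and closure under quasi-\'etale covers still works and is a correct proof. It buys you robustness: it would succeed even if \cite{CST16} only bounded the smaller group. But it imports an extra ingredient (stability of strong $F$-regularity under quasi-\'etale covers) that the paper's one-line argument does not need.
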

\begin{proof}
In this case the result of Carvajal-Rojas, Schwede, and Tucker \cite{CST16} applies directly to the main theorem without any changes. 
\end{proof}

\bibliography{etalecovers}{}
\bibliographystyle{hplain}

\end{document}